\title[Cumulants of the $q$-semicircular law,  Tutte polynomials, and heaps]
  {Cumulants of the $q$-semicircular law, \\ Tutte polynomials, and heaps}
\subjclass[2000]{Primary: 05A18, 05C31. Secondary: 46L54}
\date{}
\author{Matthieu Josuat-Vergès}
\thanks{This work was supported by the Austrian Research Foundation (FWF) via the grant Y463}
\address{CNRS and Institut Gaspard Monge, Université Paris-Est Marne-la-Vallée\\
5 Boulevard Descartes\\
Champs-sur-Marne\\
77454 Marne-la-Vallée cedex 2\\ France}
\email{matthieu.josuat-verges@univ-mlv.fr}
\newtheorem{thm}{Theorem}[section]
\newtheorem{lem}[thm]{Lemma}
\newtheorem{prop}[thm]{Proposition}
\theoremstyle{definition}
\newtheorem{defn}[thm]{Definition}
\newtheorem{rem}[thm]{Remark}
\DeclareMathOperator{\cro}{cr}
\begin{document}

\begin{abstract}
The $q$-semicircular distribution is a probability law that interpolates between the Gaussian law 
and the semicircular law. There is a combinatorial interpretation of its moments in terms of matchings 
where $q$ follows the number of crossings, whereas for the free cumulants one has to restrict the 
enumeration to connected matchings.
The purpose of this article is to describe combinatorial properties of the classical cumulants.
We show that like the free cumulants, they are obtained by an enumeration of connected matchings,
the weight being now an evaluation of the Tutte polynomial of a so-called crossing graph.
The case $q=0$ of these cumulants was studied by Lassalle using symmetric functions and
hypergeometric series. We show that the underlying combinatorics is explained through the
theory of heaps, which is Viennot's geometric interpretation of the Cartier-Foata monoid.
This method also gives results for the classical cumulants of the free Poisson law.
\end{abstract}

\maketitle

\tableofcontents

\section{Introduction}

Let us consider the sequence $\{m_n(q)\}_{n\geq0}$ defined by the generating function
\[ 
  \sum_{n\geq 0}  m_n(q) z^n =
   \cfrac{1}{1 -
   \cfrac{ [1]_q z^2}{1 -
   \cfrac{ [2]_q z^2}{1 - \ddots
}}}
\]
where $[i]_q=\frac{1-q^i}{1-q}$. For example, $m_0(q)=m_2(q)=1$, $m_4(q)=2+q$, and the odd values are 0.
The generating function being a Stieltjes continued fraction, $m_n(q)$ is the $n$th moment of a symmetric
probability measure on $\mathbb{R}$ (at least when $0\leq q\leq 1$). An explicit formula for the density 
$w(x)$ such that $m_n(q)=\int x^n w(x) {\rm d}x$ is given by Szegő~\cite{szego}:
\[
  w(x) =
 \begin{cases} \frac 1\pi \sqrt{1-q} \sin\theta \prod\limits_{n=1}^\infty (1-q^n)  |1-q^ne^{2i\theta}|^2  & 
  \text{ if } -2\leq x\sqrt{1-q} \leq 2, \\  0 & \text{otherwise,}
 \end{cases}
\]
where $\theta\in[0,\pi]$ is such that $2 \cos \theta = x \sqrt{1-q}$.
At $q=0$, it is the semicircular distribution with density $(2\pi)^{-1}\sqrt{4-x^2}$ supported on $[-2,2]$, 
whereas at the limit $q\to 1$ it becomes the Gaussian distribution with density $(2\pi)^{-1/2}e^{-x^2/2}$.
This law is therefore known either as the $q$-Gaussian or the $q$-semicircular law.
It can be conveniently characterized by its orthogonal polynomials, defined by the relation
$xH_n(x|q) = H_{n+1}(x|q) + [n]_q H_{n-1}(x|q)$ together with $H_1(x|q)=x$ and $H_0(x|q)=1$,
and called the continuous $q$-Hermite polynomials (but we do not insist on this point of view
since the notion of cumulant is not particularly relevant for orthogonal polynomials).

The semicircular law is the analogue in free probability of the Gaussian law \cite{hiai,nica}. 
More generally, the $q$-semicircular measure plays an important role in noncommutative probability 
theories \cite{anshelevitch,blitvic,bozejko1,bozejko2,leeuwen1,leeuwen2}. This was initiated by Bożejko
and Speicher \cite{bozejko1,bozejko2} who used creation and annihilation operators in a twisted 
Fock space to build generalized Brownian motions. 

The goal of this article is to examine the combinatorial meaning of the classical
cumulants $k_n(q)$ of the $q$-semicircular law (we recall the definition in the next section). 
The first values lead to the observation that
\begin{equation*}
 \tilde k_{2n}(q) =  \frac{ k_{2n}(q) }{ (q-1)^{n-1} }
\end{equation*}
is a polynomial in $q$ with nonnegative coefficients. For example:
\begin{equation*}
  \tilde k_2(q)=\tilde k_4(q)=1, \qquad  \tilde k_6(q)=q+5, \qquad
  \tilde k_8(q)= q^3+7q^2+28q+56.
\end{equation*}
We actually show in Theorem~\ref{cumultutte} that this $ \tilde k_{2n}(q)$ can be given a meaning
as a generating function of connected matchings, i.e. the same objects that give a 
combinatorial meaning to the free cumulants of the $q$-semicircular law. However, the weight function 
that we use here on connected matching is not as simple as in the case of free cumulants, it is 
given by the value at $(1,q)$ of the Tutte polynomial of a graph attached to each connected matching,
called the crossing graph.

There are various points where the evaluation of a Tutte polynomials has combinatorial meaning,
in particular $(1,0)$, $(1,1)$ and $(1,2)$. In the first and third case ($q=0$ and $q=2$), they 
can be used to give an alternative proof of Theorem~\ref{cumultutte}. These will be provided
respectively in Section~\ref{sec:heaps} and Section~\ref{sec:q=2}. The integers $\tilde k_{2n}(0)$
were recently considered by Lassalle \cite{lassalle} who defines them as a sequence simply related with
Catalan numbers, and further studied in \cite{vignat}.
Being the (classical) cumulants of the semicircular law, it might seem unnatural to 
consider this quantity since this law belongs to the world of free probability, but on the other side
the free cumulants of the Gaussian have numerous properties (see \cite{belinschi}). The interesting 
feature is that this particular case $q=0$ can be proved via the theory of heaps \cite{cartier,viennot}.
As for the case $q=2$, even though the $q$-semicircular is only defined when $|q|<1$ its moments and
cumulants and the link between still exist because \eqref{relmk} can be seen as an identity between formal
power series in $z$. The particular proof for $q=2$ is an application of the exponential formula.

\section{Preliminaries}

Let us first precise some terms used in the introduction. Besides the moments $\{m_n(q)\}_{n\geq0}$, the
$q$-semicircular law can be characterized by its {\it cumulants} $\{k_n(q)\}_{n\geq1}$ formally defined by
\begin{equation} \label{relmk}
  \sum_{n\geq 1}  k_n(q)  \frac{z^n}{n!} = \log \Bigg(  \sum_{n\geq 0}  m_n(q)  \frac{z^n}{n!}  \Bigg),
\end{equation}
or by its {\it free cumulants} $\{c_n(q)\}_{n\geq1}$ \cite{nica} formally defined by
\[
 C(zM(z)) = M(z)\quad \text{ where } M(z)=\sum_{n\geq0} m_n(q)z^n,\quad C(z) = 1+\sum_{n\geq1} c_n(q) z^n.
\]
These relations can be reformulated using set partitions.

For any finite set $V$, let $\mathcal{P}(V)$ denote the lattice of set partitions of $V$, and let
$\mathcal{P}(n)=\mathcal{P}(\{1,\dots,n\})$. We will denote by $\hat 1$ the
maximal element and by $\mu$ the Möbius function of these lattices, without mentioning $V$ explicitly.
Although we will not use it, let us mention that $\mu(\pi,\hat 1) = (-1)^{\# \pi -1} (\#\pi -1)!$ where $\#\pi$ 
is the number of blocks in $\pi$. See \cite[Chapter~3]{stanley} for details. 
When we have some sequence $(u_n)_{n\geq 1}$, for any $\pi\in\mathcal{P}(V)$ we will use the notation:
\[
   u_\pi = \prod_{ b \in \pi } u_{\# b}.
\]
Then the relations between moments and cumulants read:
\begin{equation} \label{inversion}
  m_n(q) = \sum_{ \pi \in \mathcal{P}(n) } k_\pi(q), \qquad
  k_n(q)  = \sum_{ \pi \in \mathcal{P}(n) } m_\pi(q) \mu(\pi,\hat 1).
\end{equation}
These are equivalent via the Möbius inversion formula and both can be obtained from \eqref{relmk} using
Faà di Bruno's formula. 
When $V\subset\mathbb{N}$, let $\mathcal{NC}(V)\subset\mathcal{P}(V)$ denote the subset of {\it noncrossing partitions}, 
which form a sublattice with Möbius function $\mu^{NC}$. Then we have \cite{hiai,nica}:
\begin{equation} \label{inversionfree}
  m_n(q) = \sum_{ \pi \in \mathcal{NC}(n) } c_\pi(q), \qquad
  c_n(q)  = \sum_{ \pi \in \mathcal{NC}(n) } m_\pi(q) \mu^{NC}(\pi,\hat 1).
\end{equation}
Equations \eqref{inversion} and \eqref{inversionfree} can be used to compute the first non-zero values:
\begin{equation*}\begin{array}{lll}
  k_2(q)=1,\qquad & k_4(q)=q-1, \qquad & k_6(q) =q^3+3q^2-9q+5, \\[2mm]
  c_2(q)=1,\qquad & c_4(q)=q,    \qquad & c_6(q) =q^3+3q^2.
\end{array}\end{equation*}

Let $\mathcal{M}(V)\subset\mathcal{P}(V)$ denote the set of {\it matchings}, i.e. set partitions whose all blocks
have size 2. As is customary, a block of $\sigma\in\mathcal{M}(V)$ will be called an {\it arch}. When
$V\subset\mathbb{N}$, a {\it crossing} \cite{ismail} of $\sigma\in\mathcal{M}(V)$ is a pair of arches $\{i,j\}$ and $\{k,\ell\}$
such that $i<k<j<\ell$. Let $\cro(\sigma)$ denote the number of crossings of $\sigma\in\mathcal{M}(V)$. Let
$\mathcal{N}(V) = \mathcal{M}(V) \cap \mathcal{NC}(V)$ denote the set of {\it noncrossing matchings},
i.e. those such that $\cro(\sigma)=0$.
Let also $\mathcal{M}(2n) = \mathcal{M}(\{1,\dots,2n\})$ and $\mathcal{N}(2n) = \mathcal{N}(\{1,\dots,2n\})$.
Let $\mathcal{P}^c(n) \subset \mathcal{P}(n)$ denote the set of {\it connected} set partitions, i.e. $\pi$ such 
that no proper interval of $\{1,\dots,n\}$ is a union of blocks of $\pi$, and let 
$\mathcal{M}^c(2n) = \mathcal{M}(2n) \cap \mathcal{P}^c(2n)$ denote the set of connected matchings.

It is known \cite{ismail} that for any $n\geq0$, the moment $m_{2n}(q)$ count matchings on $2n$
points according to the number of crossings:
\begin{equation} \label{mucro}
  m_{2n}(q) = \sum_{\sigma\in\mathcal{M}(2n)} q^{\cro(\sigma)}.
\end{equation}
It was showed by Lehner~\cite{lehner} that \eqref{inversionfree} and \eqref{mucro} gives a combinatorial meaning for the 
free cumulants:
\[
  c_{2n}(q) = \sum_{\sigma\in\mathcal{M}^c(2n)} q^{\cro(\sigma)}.
\]
See \cite{belinschi} for various properties of connected matchings in the context of free
probability. Let us also mention that both quantities $m_{2n}(q)$ and $c_{2n}(q)$
are considered in an article by Touchard \cite{touchard}.

\section{\texorpdfstring{A combinatorial formula for $k_n(q)$}{A combinatorial formula for kn(q)} }

We will use the Möbius inversion formula in Equation~\eqref{inversion},
but we first need to consider the combinatorial meaning of the products $m_{\pi}(q)$.

\begin{lem} \label{lemmpi}
For any $\sigma\in\mathcal{M}(2n)$ and $\pi\in\mathcal{P}(2n)$  such that $\sigma \leq \pi$,
let  $\cro(\sigma,\pi)$ be the number of crossings $(\{i,j\},\{k,\ell\})$ of $\sigma$ such that $\{i,j,k,\ell\}\subset b$
for some $b\in\pi$. Then we have:
  \begin{equation} \label{mpi}
     m_\pi(q) = \sum_{ \substack{  \sigma \in\mathcal{M}(2n) \\  \sigma \leq \pi} } q^{\cro(\sigma,\pi)}.
  \end{equation}
\end{lem}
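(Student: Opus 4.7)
The plan is to unfold the product $m_\pi(q) = \prod_{b \in \pi} m_{\#b}(q)$ using the combinatorial formula \eqref{mucro}, and then identify the resulting tuples of per-block matchings with matchings $\sigma \leq \pi$ on the whole set.

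First I would dispense with the degenerate case: if some block $b \in \pi$ has odd size, then $m_{\#b}(q) = 0$, so the left-hand side vanishes; simultaneously, a matching $\sigma \leq \pi$ requires every block of $\pi$ to decompose into arches, hence to have even cardinality, so the right-hand side is an empty sum. From now on I assume every block of $\pi$ has even size.

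Next I expand using \eqref{mucro}:
\[
m_\pi(q) = \prod_{b \in \pi} m_{\#b}(q) = \prod_{b \in \pi} \sum_{\sigma_b \in \mathcal{M}(b)} q^{\cro(\sigma_b)} = \sum_{(\sigma_b)_{b \in \pi}} q^{\sum_{b \in \pi} \cro(\sigma_b)},
\]
where the last sum runs over all tuples with $\sigma_b \in \mathcal{M}(b)$ (the crossing number of $\sigma_b$ is computed with respect to the order inherited from $\mathbb{N}$). The obvious bijection sending such a tuple to $\sigma = \bigcup_{b \in \pi} \sigma_b \in \mathcal{M}(2n)$ establishes a one-to-one correspondence between these tuples and the matchings $\sigma$ of $\{1,\dots,2n\}$ satisfying $\sigma \leq \pi$.

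The main point to verify is that under this bijection, $\sum_{b \in \pi} \cro(\sigma_b) = \cro(\sigma,\pi)$. This is essentially by definition: a pair of arches of $\sigma_b$ that forms a crossing within $b$ is a pair of arches $\{i,j\}, \{k,\ell\}$ of $\sigma$ with $i<k<j<\ell$ whose four endpoints all lie in the single block $b$; conversely, any crossing of $\sigma$ whose four endpoints lie in a common block $b \in \pi$ arises this way. Thus the sum over tuples can be rewritten as a sum over $\sigma \in \mathcal{M}(2n)$ with $\sigma \leq \pi$, weighted by $q^{\cro(\sigma,\pi)}$, yielding \eqref{mpi}. No step is really an obstacle here — the only subtlety is the bookkeeping between "crossings inside a block" and the global definition of $\cro(\sigma,\pi)$, which is immediate once the indexing is made explicit.
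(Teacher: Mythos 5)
Your proposal is correct and follows essentially the same route as the paper: the paper's proof also uses the bijection $\sigma \mapsto (\sigma|_b)_{b\in\pi}$ between $\{\sigma\in\mathcal{M}(2n) : \sigma\leq\pi\}$ and $\prod_{b\in\pi}\mathcal{M}(b)$, together with the identity $\cro(\sigma,\pi)=\sum_{b\in\pi}\cro(\sigma|_b)$, to factorize the sum. Your extra remarks (the odd-block degenerate case, and that crossings in $\mathcal{M}(b)$ are counted via the order inherited from $\mathbb{N}$) are correct but not essential additions.
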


\begin{proof}
 Denoting $\sigma|_b = \{ x\in\sigma \; : \; x\subset b \}$,
 the map $\sigma \mapsto (\sigma|_b)_{b\in\pi}$ is a natural bijection between the set
 $\{\sigma \in\mathcal{M}(2n) \; : \;  \sigma\leq\pi \}$ and the product
 $\Pi_{b\in\pi} \mathcal{M}(b) $, in  such a way that $\cro(\sigma,\pi) = \sum_{b\in\pi} \cro (\sigma|_b)$.
 This allows to factorize the right-hand side in \eqref{mpi} and obtain $m_{\pi}(q)$.
\end{proof}

From Equation~\eqref{inversion} and the previous lemma, we have:
\begin{equation} \label{kW}   \begin{split}
   k_{2n}(q)  &= \sum_{ \pi \in \mathcal{P}(2n) } m_\pi(q) \mu(\pi,\hat 1)
            = \sum_{ \pi \in \mathcal{P}(2n) }  \sum_{\substack{ \sigma \in \mathcal{M}(2n) \\ \sigma \leq \pi}}
               q^{\cro(\sigma,\pi)} \mu(\pi,\hat 1) \\
    &= \sum_{ \sigma \in \mathcal{M}(2n) }  \sum_{ \substack{ \pi\in\mathcal{P}(2n)  \\  \pi \geq \sigma}}
         q^{\cro(\sigma,\pi)} \mu(\pi,\hat 1)  = \sum_{ \sigma \in \mathcal{M}(2n) } W(\sigma),
\end{split}\end{equation}
where for each $\sigma\in\mathcal{M}(2n)$ we have introduced:
\begin{equation}  \label{W1}
   W(\sigma) = \sum_{\substack{ \pi\in\mathcal{P}(2n) \\ \pi \geq \sigma}} q^{\cro(\sigma,\pi)} \mu(\pi,\hat 1).
\end{equation}

A key point is to note that $W(\sigma)$ only depends on how the arches of $\sigma$ cross
with respect to each other, which can be encoded in a graph. This leads to the following:

\begin{defn}
  Let $\sigma\in\mathcal{M}(2n)$. The {\it crossing graph} $G(\sigma)=(V,E)$ is as follows.
  The vertex set $V$ contains the arches of $\sigma$ (i.e. $V=\sigma$), and the edge set $E$
  contains the crossings of $\sigma$ (i.e. there is an edge between the vertices $\{i,j\}$ and $\{k,\ell\}$
  if and only if $i<k<j<\ell$).
\end{defn}

See Figure~\ref{crogra} for an example. Note that the graph $G(\sigma)$ is connected if and only if
$\sigma$ is a connected matching in the sense of the previous section.

\begin{figure}[h!tp] \psset{unit=4mm}
\begin{pspicture}(1,-1)(12,3.5)
  \psdots(1,0)(2,0)(3,0)(4,0)(5,0)(6,0)(7,0)(8,0)(9,0)(10,0)(11,0)(12,0)
  \rput(1,-1){\tiny 1}\rput(2,-1){\tiny 2}\rput(3,-1){\tiny 3}\rput(4,-1){\tiny 4}\rput(5,-1){\tiny 5}
  \rput(6,-1){\tiny 6}\rput(7,-1){\tiny 7}\rput(8,-1){\tiny 8}\rput(9,-1){\tiny 9}\rput(10,-1){\tiny 10}
  \rput(11,-1){\tiny 11}\rput(12,-1){\tiny 12}
  \psarc(8,0){1}{0}{180}\psarc(11,0){1}{0}{180}
  \psarc(4,0){1}{0}{180}\psarc(7.5,0){3.5}{0}{180}
  \psarc(3.5,0){2.5}{0}{180}
  \psarc(5,0){3}{0}{180}
\end{pspicture}
\hspace{1.5cm} \psset{unit=8mm}
\begin{pspicture}(4,3)
  \psdots(1,1)(2,1)(1.5,1.87)(0.29,0.29)(2,0)(2.9,1.4)
  \psline[arrowlength=2]{-}(1,1)(1.5,1.87)\psline[arrowlength=2]{-}(2,1)(1.5,1.87)
  \psline[arrowlength=2]{-}(1,1)(2,1)\psline[arrowlength=2]{-}(1,1)(0.29,0.29)
  \psline[arrowlength=2]{-}(2,1)(2,0)\psline[arrowlength=2]{-}(2,1)(2.9,1.4)
  \rput(1.5,2.1){\tiny \{$1,6$\}}\rput(0.5,1.1){\tiny \{$2,8$\}}
  \rput(0,0){\tiny \{$7,9$\}}\rput(2,-0.3){\tiny \{$3,5$\}}
  \rput(2.55,0.75){\tiny \{$4,11$\}}\rput(3.6,1.5){\tiny \{$10,12$\}}
\end{pspicture}
\caption{A matching $\sigma$ and its crossing graph $G(\sigma)$. \label{crogra}}
\end{figure}
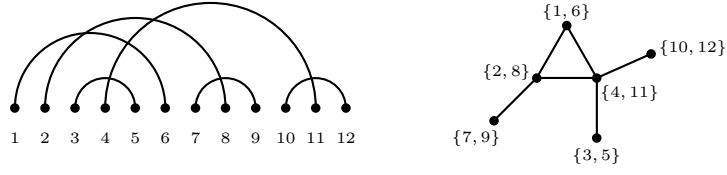

\begin{lem} \label{Wgraph}
 Let $\sigma\in\mathcal{M}(2n)$ and $G(\sigma)=(V,E)$ be its crossing graph.
 If $\pi\in\mathcal{P}(V)$, let $i(E,\pi)$ be the number of elements in the edge set $E$
 such that both endpoints are in the same block of $\pi$. Then we have:
\begin{equation} \label{W2}
  W(\sigma) = \sum_{\pi \in \mathcal{P}(V)}   q^{i(E,\pi)}   \mu(\pi,\hat 1).
\end{equation}
\end{lem}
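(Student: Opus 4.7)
The plan is to rewrite the sum defining $W(\sigma)$ by reindexing over partitions of the vertex set $V=\sigma$ of the crossing graph, via the standard lattice isomorphism between the interval $[\sigma,\hat 1]$ in $\mathcal{P}(2n)$ and the full partition lattice $\mathcal{P}(V)$.

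First, I would make the reindexing precise. A partition $\pi\in\mathcal{P}(2n)$ with $\pi\geq\sigma$ is exactly a partition whose blocks are unions of arches of $\sigma$. Hence contracting each arch $\{i,j\}\in\sigma$ to a single point yields a bijection
\[
\Phi\colon \{\pi\in\mathcal{P}(2n):\pi\geq\sigma\}\;\longrightarrow\;\mathcal{P}(V),
\]
where a block of $\Phi(\pi)$ consists of those arches of $\sigma$ whose endpoints lie in a common block of $\pi$. This bijection is order-preserving, so it identifies the interval $[\sigma,\hat 1]\subset\mathcal{P}(2n)$ with $\mathcal{P}(V)$ as lattices, and in particular preserves Möbius values: $\mu(\pi,\hat 1)=\mu(\Phi(\pi),\hat 1)$, where on the right $\hat 1$ is the maximal element of $\mathcal{P}(V)$.

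Next I would match the weight. An edge of $G(\sigma)$ is a pair of arches $\{i,j\},\{k,\ell\}$ of $\sigma$ forming a crossing; by the definition of $\cro(\sigma,\pi)$, such a crossing contributes to $\cro(\sigma,\pi)$ exactly when the four endpoints lie in one block of $\pi$, which by construction of $\Phi$ is the same as saying the two arches lie in one block of $\Phi(\pi)$. Therefore $\cro(\sigma,\pi)=i(E,\Phi(\pi))$ for every $\pi\geq\sigma$.

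Substituting both identifications into \eqref{W1} and replacing the summation variable by $\pi'=\Phi(\pi)\in\mathcal{P}(V)$ immediately gives \eqref{W2}. There is no real obstacle: the only point to state carefully is the isomorphism $[\sigma,\hat 1]\cong\mathcal{P}(V)$ and the resulting equality of Möbius values, which is standard for the partition lattice (see the reference to \cite[Chapter~3]{stanley} already invoked in the preliminaries).
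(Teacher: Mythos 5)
Your proposal is correct and follows exactly the same route as the paper's proof: identifying the interval $[\sigma,\hat 1]$ in $\mathcal{P}(2n)$ with $\mathcal{P}(V)$ via contraction of arches, checking that Möbius values and the crossing weight are preserved, and rewriting \eqref{W1} accordingly. You simply spell out the details (order-preservation, $\cro(\sigma,\pi)=i(E,\Phi(\pi))$) that the paper leaves as ``a natural bijection.''
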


\begin{proof}
There is a natural bijection between the interval $[\sigma,\hat 1]$ in $\mathcal{P}(2n)$ and the set $\mathcal{P}(V)$,
in such a way that $\cro(\sigma,\pi) = i(E,\pi)$. Hence Equation~\eqref{W2} is just a rewriting of \eqref{W1} in terms
of the graph $G(\sigma)$.
\end{proof}

Now we can use Proposition~\ref{proptutte} from the next section. It allows to recognize $(q-1)^{-n+1}W(\sigma)$
as an evaluation of the Tutte polynomial $T_{G(\sigma)}$, except that it is 0 when the graph is not connected.

Gathering Equations~\eqref{kW}, \eqref{W2}, and Proposition~\ref{proptutte} from the next section,
we have proved:

\begin{thm} \label{cumultutte}
For any $n\geq 1$,
\[
  \tilde k_{2n}(q) = \sum_{\sigma \in \mathcal{M}^c(2n)}  T_{G(\sigma)} (1,q).
\]
In particular $\tilde k_{2n}(q)$ is a polynomial in $q$ with nonnegative coefficients.
\end{thm}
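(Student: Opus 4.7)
The plan is to assemble the pieces the author has already prepared. Equation~\eqref{kW} combined with Lemma~\ref{Wgraph} already gives
\[
  k_{2n}(q) \;=\; \sum_{\sigma \in \mathcal{M}(2n)} W(\sigma), \qquad W(\sigma) \;=\; \sum_{\pi \in \mathcal{P}(V)} q^{i(E,\pi)} \mu(\pi, \hat 1),
\]
where $(V, E) = G(\sigma)$ and $|V| = n$. Proposition~\ref{proptutte} of the next section is exactly what is needed to finish: it supplies both that $W(\sigma) = 0$ whenever $G(\sigma)$ is disconnected (so that only $\sigma \in \mathcal{M}^c(2n)$ contribute to the outer sum) and that $W(\sigma) = (q-1)^{n-1}\, T_{G(\sigma)}(1,q)$ whenever $G(\sigma)$ is connected. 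Substituting these two evaluations and dividing through by $(q-1)^{n-1}$ yields the stated formula for $\tilde k_{2n}(q)$. The nonnegativity clause follows at once from the classical fact that $T_G(1, q) \in \mathbb{Z}_{\geq 0}[q]$ for every connected graph $G$: this is visible from the spanning-tree activity expansion, where at $x = 1$ only external activities survive and each spanning tree $T$ contributes $q^{\mathrm{ext}(T)}$ with $\mathrm{ext}(T) \geq 0$, and a sum over $\mathcal{M}^c(2n)$ preserves this property.

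The genuine substance of the argument is therefore Proposition~\ref{proptutte} itself, and this is where the main obstacle lies. The identity to establish relates a Möbius sum over the partition lattice of the vertex set of $G$ to a specialization of the Tutte polynomial of $G$. For the disconnected case, the natural strategy is to decompose each $\pi \in \mathcal{P}(V)$ according to how it restricts to the connected components of $G$ together with the coarsening that ``glues'' components, factorize $\mu(\pi, \hat 1)$ through this decomposition, and exploit the vanishing $\sum_{\pi \in \mathcal{P}(U)} \mu(\pi, \hat 1) = 0$ for $|U| \geq 2$ on the gluing factor. For the connected case, the cleanest route is to rewrite
\[
  q^{i(E,\pi)} \;=\; \prod_{\{u,v\} \in E}\bigl(1 + (q-1)\,\mathbf{1}_{u \sim_\pi v}\bigr),
\]
expand over edge subsets $A \subseteq E$, carry out the inner Möbius sum for each fixed $A$ (which should isolate subsets $A$ that together with the given ``connecting'' structure produce a spanning connected subgraph), and match the resulting expression with the corank-nullity expansion of $T_G(x, y)$ specialized at $(x, y) = (1, q)$. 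Sanity checks on the simplest cases — the single arch, where $W = 1$ and $T_G(1,q) = 1$, and the lone crossing, where $W = q-1$ and $(q-1)\,T_{K_2}(1,q) = q-1$ — confirm that the normalization by $(q-1)^{n-1}$ is the correct one.
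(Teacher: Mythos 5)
Your assembly of the theorem from \eqref{kW}, Lemma~\ref{Wgraph} and Proposition~\ref{proptutte} is exactly what the paper does, and it is correct; the only real content is Proposition~\ref{proptutte}, which you rightly identify as the crux. There, however, you take a genuinely different route from the paper. The paper proves \eqref{tutte} by deletion--contraction: it shows that $U_G=(q-1)^{-n+1}\sum_\pi q^{i(E,\pi)}\mu(\pi,\hat 1)$ satisfies $U_G=qU_{G\backslash e}$ for a loop and $U_G=U_{G/e}+U_{G\backslash e}$ otherwise, with initial value $\delta_{n1}$, and matches this against the recursion \eqref{recurtutte} at $x=1$ (where the bridge case merges into the generic case for the connectivity-truncated Tutte polynomial). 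Your route expands $q^{i(E,\pi)}=\prod_{e\in E}\bigl(1+(q-1)\mathbf{1}_{u\sim_\pi v}\bigr)$ over edge subsets $A\subseteq E$; the inner sum then becomes $\sum_{\pi\geq\bar A}\mu(\pi,\hat 1)=\delta_{\bar A,\hat 1}$, where $\bar A$ is the partition of $V$ into components of $(V,A)$, so only connected spanning subgraphs survive and one gets $\sum_{A\,:\,(V,A)\text{ connected}}(q-1)^{|A|}$, which is $(q-1)^{n-1}T_G(1,q)$ by the corank--nullity expansion at $x=1$ (and is empty, hence $0$, when $G$ is disconnected --- your separate component-decomposition argument for that case is not even needed). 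Both arguments are valid; the paper's recursion is self-contained and needs no prior expansion formula for $T_G$, while yours is shorter, handles the connected and disconnected cases uniformly, and makes the positivity of $\tilde k_{2n}(q)$ transparent via the subgraph (or spanning-tree activity) expansion, which the paper leaves implicit.
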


\section{The Tutte polynomial of a connected graph}

For any graph $G=(V,E)$, let $T_G(x,y)$ denote its Tutte polynomial, we give here a short definition
and refer to \cite[Chapter~9]{aigner} for details. This graph invariant can be computed recursively
via edge deletion and edge contraction. Let $e\in E$, let $G\backslash e = (V,E\backslash e)$ and
$G/e = ( V/e , E\backslash e)$ where $V/e$ is the quotient set where both endpoints of the edge $e$
are identified. Then the recursion is:
\begin{equation} \label{recurtutte}
 T_G(x,y) =
 \begin{cases}
  xT_{G/e}(x,y) &  \text{if $e$ is a bridge,} \\
  yT_{G\backslash e}(x,y) &  \text{if $e$ is a loop,} \\
  T_{G/e}(x,y)+T_{G\backslash e}(x,y) &  \text{otherwise.}
 \end{cases}
\end{equation}
The initial case is that $T_G(x,y)=1$ if the graph $G$ has no edge.
Here, a {\it bridge} is an edge $e$ such that $G\backslash e$ has one more connected component than $G$, 
and a {\it loop} is an edge whose both endpoints are identical.

\begin{prop} \label{proptutte}
 Let $G=(V,E)$ be a graph (possibly with multiple edges and loops). Let $n=\#V$.
 With $i(E,\pi)$ defined as in Lemma~\ref{Wgraph}, we have:
\begin{equation} \label{tutte}
  \frac{1}{(q-1)^{n-1}}  \sum_{\pi\in\mathcal{P}(V)} q^{i(E,\pi)}  \mu(\pi,\hat 1)  =
\begin{cases}
T_G(1,q) & \hbox{ if $G$ is connected,} \\
0 & \hbox{otherwise.}
\end{cases}
\end{equation}
\end{prop}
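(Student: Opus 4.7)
The plan is to re-express the left-hand side as a generating function of connected spanning subgraphs of $G$, then match it with the subgraph-expansion (Whitney rank) form of the Tutte polynomial
\[
T_G(x,y) = \sum_{A \subseteq E} (x-1)^{r(E)-r(A)}(y-1)^{|A|-r(A)},
\]
specialized at $x=1$.

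First I would apply the elementary expansion $q = 1+(q-1)$ edgewise. Writing $E_\pi \subseteq E$ for the set of edges whose two endpoints lie in a common block of $\pi$, we have $i(E,\pi) = |E_\pi|$ and hence
\[
q^{i(E,\pi)} = \sum_{A \subseteq E_\pi} (q-1)^{|A|}.
\]
For a fixed $A \subseteq E$, let $\pi_A \in \mathcal{P}(V)$ be the partition of $V$ into the connected components of the spanning subgraph $(V,A)$. The condition $A \subseteq E_\pi$ is equivalent to $\pi \geq \pi_A$, so exchanging the order of summation gives
\[
\sum_{\pi \in \mathcal{P}(V)} q^{i(E,\pi)} \mu(\pi,\hat 1) = \sum_{A \subseteq E} (q-1)^{|A|} \sum_{\pi \geq \pi_A} \mu(\pi,\hat 1).
\]
The inner sum is the defining property of the Möbius function: it equals $1$ if $\pi_A = \hat 1$ and $0$ otherwise. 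Thus only those $A$ for which $(V,A)$ is connected survive, and
\[
\sum_{\pi} q^{i(E,\pi)} \mu(\pi,\hat 1) = \sum_{\substack{A \subseteq E \\ (V,A) \text{ connected}}} (q-1)^{|A|}.
\]

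If $G$ itself is disconnected, no $A \subseteq E$ can span $V$ by a connected subgraph and the sum is $0$, yielding the second case. If $G$ is connected, every connected spanning subgraph satisfies $|A| \geq n-1$, so we may factor out $(q-1)^{n-1}$ and are left with $\sum_A (q-1)^{|A|-n+1}$ over connected spanning $A$. This is exactly the Whitney rank polynomial at $x=1$: the factor $(x-1)^{r(E)-r(A)}$ forces $r(A) = r(E) = n-1$, which for a spanning subgraph is equivalent to connectedness, and then $|A|-r(A) = |A|-n+1$. Hence the factored sum equals $T_G(1,q)$, as claimed.

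There is no real obstacle: the proof is essentially bookkeeping around the standard Möbius identity on $\mathcal{P}(V)$ together with the rank-polynomial form of $T_G$. The only thing to be careful about is the dictionary between subsets $A \subseteq E$ and partitions $\pi \geq \pi_A$, and the fact that the Whitney rank identity can itself be proved by induction from the deletion--contraction recurrence~\eqref{recurtutte}, which is classical and available in \cite[Chapter~9]{aigner}.
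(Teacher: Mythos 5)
Your proof is correct, but it takes a genuinely different route from the one in the paper. The paper proves the identity by showing that the left-hand side $U_G$ satisfies the deletion--contraction recurrence ($U_G = qU_{G\backslash e}$ for a loop, $U_G = U_{G/e} + U_{G\backslash e}$ otherwise) with initial condition $U_G=\delta_{n1}$ on edgeless graphs, and then observing that the right-hand side satisfies the same recurrence because at $x=1$ the bridge case of \eqref{recurtutte} merges into the generic case once one kills disconnected graphs. You instead compute the sum in closed form: expanding $q^{i(E,\pi)}=\sum_{A\subseteq E_\pi}(q-1)^{|A|}$, exchanging summations via the equivalence $A\subseteq E_\pi \Leftrightarrow \pi\geq\pi_A$, and invoking the Möbius identity $\sum_{\pi\geq\pi_A}\mu(\pi,\hat 1)=\delta_{\pi_A\hat 1}$ to reduce everything to $\sum_A (q-1)^{|A|}$ over connected spanning subgraphs, which is $( q-1)^{n-1}T_G(1,q)$ by the Whitney rank expansion. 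Your intermediate object is precisely the external activity polynomial $C_G$ with all edge weights set to $q-1$, so in effect you have carried out the alternative proof that the paper only sketches in the Remark following the proposition (via Burman--Shapiro), with the added benefit that your argument also covers the disconnected case directly. The trade-off is that your proof needs the equivalence of the recursive definition \eqref{recurtutte} with the rank--nullity subgraph expansion as an external input (which you correctly flag as classical), whereas the paper's induction is self-contained given only the recursive definition it states. Both arguments handle loops and multiple edges without difficulty.
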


\begin{proof}
Denote by $U_G$ the left-hand side in \eqref{tutte} and let $e$ be an edge of $G$.
Suppose $e\in E$ is a loop, it is then clear that $i(E\backslash e,\pi)=i(E,\pi)-1$, so $U_G = qU_{G\backslash e}$.
Then suppose $e$ is not a loop, and let $x$ and $y$ be its endpoints. We have:
\[
  U_G - U_{G\backslash e} =
  \frac{1}{(q-1)^{n-1}}  \sum_{\pi\in\mathcal{P}(V)} \Big(q^{i(E,\pi)} - q^{i(E\backslash e,\pi)} \Big) \mu(\pi,\hat 1).
\]
In this sum, all terms where $x$ and $y$ are in different blocks
of $\pi$ vanish. So we can keep only $\pi$ such that $x$ and $y$ are in the same block, and these can be
identified with elements of $\mathcal{P}(V/e)$ and satisfy $i(E\backslash e,\pi)=i(E,\pi)-1$. We obtain:
\[
  U_G - U_{G\backslash e} = \frac{1}{(q-1)^{n-2}} \sum_{\pi\in\mathcal{P}(V/e)}  q^{i(E\backslash e,\pi)} \mu(\pi,\hat 1) = U_{G/e}.
\]
This is a recurrence relation which determines $U_G$, and it remains to describe the initial case.
So, suppose the graph $G$ has $n$ vertices and no edge, i.e. $G=(V,\emptyset)$. We have
$i(\emptyset,\pi)=0$. By the definition of the Möbius function, we have:
\[
  \sum_{\pi\in\mathcal{P}(V)}  \mu(\pi,\hat 1) = \delta_{n1},
\]
hence $U_G=\delta_{n1}$ as well in this case.

We have thus a recurrence relation for $U_G$, and it remains to show that the right-hand side of \eqref{tutte}
satisfies the same relation. This is true because when $x=1$, and when we consider a variant of the Tutte
polynomial which is 0 for a non-connected graph, then the first case of \eqref{recurtutte} becomes a particular
case of the third case.
\end{proof}


\begin{rem}
 The proposition of this section can also be derived from results of Burman and Shapiro \cite{burman}, at least in
 the case where $G$ is connected. More precisely, in the light of \cite[Theorem~9]{burman} we can recognize
 the sum in the left-hand side of \eqref{tutte} as the {\it external activity polynomial} $C_G(w)$, where all edge
 variables are specialized to $q-1$. It is known to be related with $T_G(1,q)$, see for example
 \cite[Section 2.5]{sokal}.
\end{rem}

\section{\texorpdfstring{The case $q=0$, Lassalle's sequence and heaps}{The case q=0, Lassalle's sequence and heaps}}
\label{sec:heaps}

In the case $q=0$, the substitution $z \to iz$ recasts Equation~\eqref{relmk} as
\begin{equation} \label{relmk2}
   - \log\bigg(  \sum_{n\geq 0} (-1)^n C_n \frac{z^{2n}}{(2n)!}   \bigg) = \sum_{n\geq 1} \tilde k_{2n}(0) \frac{z^{2n}}{(2n)!},
\end{equation}
where $C_n = \frac{1}{n+1}\tbinom {2n}n $ is the $n$th Catalan number, known to be the cardinal of
$\mathcal{N}(2n)$, see \cite{stanley}. The integer sequence $\{\tilde k_{2n}(0)\}_{n\geq1}=(1,1,5,56,\dots)$
was previously defined by Lassalle \cite{lassalle} via an equation equivalent to \eqref{relmk2}, 
and Theorem 1 from \cite{lassalle} states that the integers $\tilde k_{2n}(0)$ are positive and increasing
(stronger results are also true, see \cite{lassalle,vignat}).

The goal of this section is to give a meaning to \eqref{relmk2} in the context of the theory of heaps \cite{viennot}
\cite[Appendix 3]{cartier}. This will give an alternative proof of Theorem~\ref{cumultutte} for the case $q=0$, based on
a classical result on the evaluation $T_G(1,0)$ of a Tutte polynomial in terms of some orientations of the graph $G$.

\begin{defn}
A graph $G=(V,E)$ is {\it rooted} when it has a distinguished vertex $ r \in V$, called the {\it root}.
An orientation of $G$ is {\it root-connected}, if for any vertex $v\in V$ there exists a directed path
from the root to $v$.
\end{defn}

\begin{prop}[Greene \& Zaslavsky \cite{greene}]  \label{tutte10}
 If $G$ is a rooted and connected graph, $T_G(1,0)$ is the number of its root-connected acyclic orientations.
\end{prop}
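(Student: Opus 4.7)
The plan is to induct on the number of edges of $G$, showing that the number $a(G,r)$ of root-connected acyclic orientations of $G$ satisfies the same deletion-contraction recursion as the specialization $T_G(1,0)$ of \eqref{recurtutte}, and that the two quantities agree on the base case. Before doing so, I would replace root-connectedness by an equivalent and more manageable condition: \emph{in an acyclic orientation of a connected graph $G$, root-connectedness from $r$ is equivalent to $r$ being the unique source}. Indeed, if $r$ is the unique source, then for any vertex $v$ a maximal backward path from $v$ must terminate at a source, which must be $r$. Conversely, if root-connectedness holds then $r$ is necessarily a source (an incoming edge $w\to r$ would combine with a directed path $r\to\cdots\to w$ to form a cycle), and no other vertex $s$ can be a source because it would need to receive a directed path from $r$ despite having no incoming edge.

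With this reformulation, I would verify that $a(G,r)$ obeys the three cases of \eqref{recurtutte} specialized at $(x,y)=(1,0)$. If $G$ has a loop then any orientation contains a cycle, so $a(G,r)=0$, matching the factor $y$ at $y=0$. If $e=uv$ is a bridge and $G_1, G_2$ denote the two components of $G\backslash e$ with $r,u\in G_1$ and $v\in G_2$, then $e$ must be oriented $u\to v$ (otherwise $v$ is unreachable from $r$), the restriction to $G_1$ must be acyclic with unique source $r$, and the restriction to $G_2$ must be acyclic with unique source $v$. This yields $a(G,r)=a(G_1,r)\cdot a(G_2,v)$, and the analogous factorization applies after contracting $e$, giving $a(G,r)=a(G/e,r)$, as required.

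The main obstacle is the case where $e=uv$ is neither a bridge nor a loop, for which I want $a(G,r) = a(G\backslash e,r) + a(G/e,r)$. Here I would classify each acyclic orientation $\omega$ of $G\backslash e$ according to its source set and to whether $\omega$ contains a directed path between $u$ and $v$, then count, for each class, how many of the (at most two) orientations of $e$ extend $\omega$ to an acyclic orientation of $G$ having $r$ as unique source. The crucial structural input is that acyclic orientations of $G/e$ correspond bijectively to those acyclic orientations of $G\backslash e$ having no directed path between $u$ and $v$, with the merged vertex $w$ being a source of the quotient precisely when both $u$ and $v$ are sources of $\omega$. A careful bookkeeping across the admissible source sets $\{r\}$, $\{r,u\}$ and $\{r,v\}$ (with a mild separate treatment when $r\in\{u,v\}$) then matches the two sides of the recursion. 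The base case $G=(\{r\},\emptyset)$ gives $a(G,r)=1=T_G(1,0)$, completing the induction.
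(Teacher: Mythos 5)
The paper does not prove this proposition at all: it is quoted from Greene and Zaslavsky \cite{greene} and used as a black box (both here and, implicitly, in Section 5, where the paper independently records the equivalence between root-connectedness and having a unique source for acyclic orientations). So there is no ``paper proof'' to compare against; what you have written is a self-contained deletion--contraction proof of the cited result, and its skeleton is sound. The reformulation via unique sources is correctly justified in both directions, the loop and bridge cases are right (for the bridge case, note that the factorization $a(G/e,r)=a(G_1,r)\,a(G_2,w)$ for the one-point join is cleanest to see in the root-connectedness formulation, since a directed path cannot revisit the cut vertex), and the base case is trivial.

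The only place where your write-up stops short of a complete argument is the ``careful bookkeeping'' in the non-bridge, non-loop case, and it is worth recording the one observation that makes it close up. With $r\notin\{u,v\}$, an acyclic orientation $\omega$ of $G\backslash e$ with source set $\{r,v\}$ contributes $1$ to $a(G,r)$ exactly when orienting $e$ as $u\to v$ is acyclic, i.e.\ when $\omega$ has no directed path from $v$ to $u$; but it contributes to $a(G/e,r)$ only when $\omega$ has no directed path between $u$ and $v$ \emph{in either direction}. These two conditions look different, and the count would fail to match without the remark that a source has no incoming edges, so if $v$ is a source of $\omega$ there can be no directed path \emph{into} $v$ at all; hence ``no path from $v$ to $u$'' and ``no path in either direction'' coincide in this class. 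The same remark handles the class $\{r,u\}$ and the case $r\in\{u,v\}$ (where only one orientation of $e$ can preserve $r$ as a source, and the source sets $\{r\}$ and $\{r,v\}$ are the two contributing classes, matching deletion and contraction respectively). With that point made explicit, your proof is complete and correct.
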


The notion of heap was introduced by Viennot \cite{viennot} as a geometric interpretation of elements in
the Cartier-Foata monoid \cite{cartier}, and has various applications in enumeration. We refer to
\cite[Appendix 3]{cartier} for a modern presentation of this subject (and comprehensive bibliography).

Let $M$ be the monoid built on the generators $(x_{ij})_{1\leq i < j}$ subject to the relations
$x_{ij}x_{k\ell} = x_{k\ell} x_{ij} $ if $i<j<k<\ell$ or $i<k<\ell<j$. We call it the Cartier-Foata monoid (but
in other contexts it could be called a partially commutative free monoid or a trace monoid as well).
Following \cite{viennot}, we call an element of $M$ a {\it heap}.

Any heap can be represented as a ``pile'' of segments, as in the left part of Figure~\ref{heapposet}
(this is remindful of \cite{bousquet}).
This pile is described inductively: the generator $x_{ij}$ correspond to a single segment whose extremities
have abscissas $i$ and $j$, and multiplication $m_1m_2$ is obtained by placing the pile of segments
corresponding to $m_2$ above the one corresponding to $m_1$. In terms of segments, the relation 
$x_{ij}x_{k\ell} = x_{k\ell} x_{ij} $ if $i<j<k<\ell$ has a geometric interpretation: segments are allowed to move
vertically as long as they do not intersect (this is the case of $x_{34}$ and $x_{67}$ in Figure~\ref{heapposet}).
Similarly, the other relation $x_{ij}x_{k\ell} = x_{k\ell} x_{ij} $ if $i<k<\ell<j$ can be treated by thinking of
each segment as the projection of an arch as in the central part of Figure~\ref{heapposet}. In this three-dimensional
representation, all the commutation relations are translated in terms of arches that are allowed to move
along the dotted lines as long as they do not intersect.

A heap can also be represented as a poset. Consider two segments $s_1$ and $s_2$ in a pile of segments,
then the relation is defined by saying that $s_1<s_2$ if $s_1$ is always below $s_2$, after any movement of the
arches (along the dotted lines and as long as they do not intersect, as above).
This way, a heap can be identified with a poset where each element is labeled by a generator of $M$, and 
two elements whose labels do not commute are comparable.
See the right part of Figure~\ref{heapposet} for an example and \cite[Appendice 3]{cartier} for details.

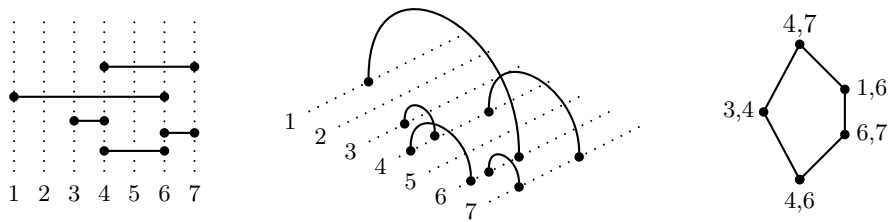
\begin{figure}[h!tp] \psset{unit=4mm}
 \begin{pspicture}(1,-1)(7,5)
    \rput(1,-0.7){\small 1}
    \rput(2,-0.7){\small 2}
    \rput(3,-0.7){\small 3}
    \rput(4,-0.7){\small 4}
    \rput(5,-0.7){\small 5}
    \rput(6,-0.7){\small 6}
    \rput(7,-0.7){\small 7}
    \psline[linestyle=dotted](1,0)(1,5)\psline[linestyle=dotted](2,0)(2,5)\psline[linestyle=dotted](3,0)(3,5)
    \psline[linestyle=dotted](4,0)(4,5)\psline[linestyle=dotted](5,0)(5,5)\psline[linestyle=dotted](6,0)(6,5)
    \psline[linestyle=dotted](7,0)(7,5)
    \psline{*-*}(4,0.7)(6,0.7) \psline{*-*}(6,1.3)(7,1.3) \psline{*-*}(3,1.7)(4,1.7)
    \psline{*-*}(1,2.5)(6,2.5) \psline{*-*}(4,3.5)(7,3.5)
 \end{pspicture}
\hspace{1cm}
\begin{pspicture}(-0.7,-3)(11,5)
\psline[linestyle=dotted](0,0)(5,2.5)
\psline[linestyle=dotted](1,-0.5)(6,2)
\psline[linestyle=dotted](2,-1)(7,1.5)
\psline[linestyle=dotted](3,-1.5)(8,1)
\psline[linestyle=dotted](4,-2)(9,0.5)
\psline[linestyle=dotted](5,-2.5)(10,0)
\psline[linestyle=dotted](6,-3)(11,-0.5)
\rput(-0.6,-0.3){\small 1}
\rput(0.4,-0.8){\small 2}
\rput(1.4,-1.3){\small 3}
\rput(2.4,-1.8){\small 4}
\rput(3.4,-2.3){\small 5}
\rput(4.4,-2.8){\small 6}
\rput(5.4,-3.3){\small 7}
\psbezier{*-*}(3.4,-1.3)(3.4,0.4)(5.4,-0.6)(5.4,-2.3)
\psbezier{*-*}(6,-2)(6,-1)(7,-1.5)(7,-2.5)
\psbezier{*-*}(3.2,-0.4)(3.2,0.6)(4.2,0.2)(4.2,-0.8)
\psbezier{*-*}(2,1)(2,5.5)(7,3)(7,-1.5)
\psbezier{*-*}(6,0)(6,2.5)(9,1)(9,-1.5)
\end{pspicture}
\hspace{1cm} \psset{unit=6mm}
 \begin{pspicture}(-1.5,-0.5)(2,3)
  \psline[showpoints=true]{}(0,0)(1,1)(1,2)(0,3)(-0.8,1.5)(0,0)
  \rput(0,-0.5){\small 4,6}\rput(1.6,1){\small 6,7}\rput(1.6,2){\small 1,6}\rput(-1.35,1.51){\small 3,4}\rput(0,3.4){4,7}
 \end{pspicture}
\caption{The heap $m=x_{46}x_{67}x_{34}x_{16}x_{47}$ as a pile of segments and the Hasse diagram of the associated poset.
\label{heapposet}}
\end{figure}

\begin{defn}
 For any heap $m\in M$, let $|m|$ denote its length as a product of generators.
 Moreover, $m\in M$ is called a {\it trivial heap} if it is a product of pairwise commuting generators.
 Let $M^\circ\subset M $  denote the set of trivial heaps.
\end{defn}

Let $\mathbb{Z}[[M]]$ denote the ring of formal power series in $M$, i.e. all formal sums
$\sum_{m\in M} \alpha_m m$ with multiplication induced by the one of $M$.
A fundamental result of Cartier and Foata \cite{cartier} is the identity in $\mathbb{Z}[[M]]$ as follows:
\begin{equation} \label{cartierfoata}
\bigg( \sum_{m \in M^\circ }   (-1)^{|m|}   m \bigg)^{-1} = \sum_{m\in M} m.
\end{equation}
Note that $M^\circ$ contains the neutral element of $M$ so that the sum in the left-hand side is invertible,
being a formal power series with constant term equal to 1.

\begin{defn} \label{defpyr}
 An element $m\in M$ is called a {\it pyramid} if the associated poset has a unique maximal element.
 Let $P\subset M$ denote the subset of pyramids.
\end{defn}

A fundamental result of the theory of heaps links the generating function of pyramids with the one of all
heaps \cite{cartier,viennot}. It essentially relies on the exponential formula for labeled combinatorial
objects, and reads:
\begin{equation} \label{logpyr1}
  \log \bigg(    \sum_{m\in M} m    \bigg) =_{\text{comm}} \sum_{p \in P} \frac{1}{|p|} p,
\end{equation}
where the sign $=_{\text{comm}}$ means that the equality holds in any commutative
quotient of $\mathbb{Z}[[M]]$. Combining \eqref{cartierfoata} and \eqref{logpyr1}, we obtain:
\begin{equation} \label{logpyr2} 
   -  \log \bigg(    \sum_{m\in M^\circ} (-1)^{|m|} m    \bigg) =_{\text{comm}} \sum_{p \in P} \frac{1}{|p|} p.
\end{equation}
Now, let us examine how to apply this general equality to the present case.

The following lemma is a direct consequence of the definitions, and permits
to identify trivial heaps with noncrossing matchings.

\begin{lem}  \label{Phi}
The map 
\begin{equation} \label{defphi}
   \Phi :   x_{i_1j_1}  \cdots x_{i_nj_n} \mapsto \{\{i_1,j_1\},\dots,\{i_n,j_n\}\}
\end{equation}
defines a bijection between the set of trivial heaps $M^\circ$ and the disjoint union of $\mathcal{N}(V)$
where $V$ runs through the finite subsets (of even cardinal) of $\mathbb{N}_{>0}$.
\end{lem}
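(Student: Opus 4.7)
The plan is to check three things: that $\Phi$ is well-defined on trivial heaps, that the image lands in noncrossing matchings on a finite even-cardinality subset $V \subset \mathbb{N}_{>0}$, and finally that $\Phi$ admits a two-sided inverse.

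First I would unpack the commutation relations: two generators $x_{ij}$ and $x_{k\ell}$ with $\{i,j\}\neq\{k,\ell\}$ commute in $M$ exactly when the arches $\{i,j\}$ and $\{k,\ell\}$ are disjoint ($i<j<k<\ell$) or nested ($i<k<\ell<j$), i.e.\ exactly when they do not cross and do not share an endpoint. Since no commutation relation is declared for $x_{ij}$ with itself, a trivial heap is a product of \emph{pairwise commuting distinct} generators; reorderings of its factors give the same element of $M$, so the unordered collection of arches on the right-hand side of \eqref{defphi} depends only on the element of $M^\circ$, and $\Phi$ is well-defined.

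Next I would verify that $\Phi(m)$ is indeed a noncrossing matching on some $V\subset\mathbb{N}_{>0}$ of even cardinality. Distinctness of the generators and the above analysis of the commutation relations imply that the arches of $\Phi(m)$ are pairwise disjoint or properly nested; in particular, no two share an endpoint, so the set $V$ of all endpoints has cardinality $2n$ where $n=|m|$, the arches partition $V$ into blocks of size two, and no two of them cross. Thus $\Phi(m)\in\mathcal{N}(V)$.

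For the inverse, given any $V$ of even cardinality and any $\sigma=\{\{i_1,j_1\},\dots,\{i_n,j_n\}\}\in\mathcal{N}(V)$, I would set $\Psi(\sigma)=x_{i_1j_1}\cdots x_{i_nj_n}$; the noncrossing property ensures the factors commute pairwise in $M$, so the product is a well-defined trivial heap independent of the chosen order, and the generators are distinct because the arches are. The identities $\Phi\circ\Psi=\mathrm{id}$ and $\Psi\circ\Phi=\mathrm{id}$ are then immediate from the definitions. The only subtle point, which is the one I would emphasize in the write-up, is the translation between ``pairwise commuting distinct generators'' on the monoid side and ``pairwise disjoint or nested arches'' on the matching side; once that dictionary is set up, everything else is bookkeeping.
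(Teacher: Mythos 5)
Your proof is correct; the paper itself gives no argument for this lemma, calling it ``a direct consequence of the definitions,'' and your write-up is exactly the routine verification behind that remark: the dictionary between pairwise-commuting distinct generators and pairwise disjoint-or-nested arches, plus the evident inverse map. Nothing differs in substance from what the paper intends.
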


For a general heap $m\in M$, we can still define $\Phi(m)$ via \eqref{defphi} but it may not be a matching,
for example $\Phi(x_{1,2}x_{2,3}) = \{\{1,2\},\{2,3\}\}$. Let us first consider the case of $m\in M$ such that
$\Phi(m)$ is really a matching.

\begin{lem} \label{ac_or}
 Let $\sigma\in\mathcal{M}(V)$ for some $V\subset \mathbb{N}_{>0}$. Then the heaps $m\in M$ such that
 $\Phi(m)=\sigma$ are in bijection with acyclic orientations of $G(\sigma)$.
 Thus, such a heap $m\in M$ can be identified with a pair $(\sigma,r)$ where $r$ is an acyclic orientation
 of the graph $G(\sigma)$.
\end{lem}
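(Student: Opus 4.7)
The plan is to realize the heap-poset correspondence of Cartier–Foata and Viennot in the specific setting of the crossing graph $G(\sigma)$. Since $\Phi(m)=\sigma$ forces each arch of $\sigma$ to appear exactly once as a letter of $m$, the associated heap poset $P_m$ has as its underlying set the arches of $\sigma$, identified with the vertex set $V$ of $G(\sigma)$. By the defining commutation rules of $M$, two generators $x_{ij}$ and $x_{k\ell}$ fail to commute precisely when the arches $\{i,j\}$ and $\{k,\ell\}$ cross, that is, precisely when they form an edge of $G(\sigma)$.

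By the general theory of heap posets, $P_m$ is characterized among labeled posets on $V$ by: (a) any two vertices adjacent in $G(\sigma)$ are comparable in $P_m$, and (b) every cover relation of $P_m$ joins two vertices adjacent in $G(\sigma)$. The forward map then sends $m$ to the orientation of $G(\sigma)$ obtained by orienting each edge $\{u,v\}$ from the smaller endpoint to the larger endpoint in $P_m$. This is well defined by (a) and is acyclic since it is induced by a partial order.

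Conversely, given an acyclic orientation $r$ of $G(\sigma)$, I form the transitive closure $P_r$ on $V$. Condition (a) is automatic. For condition (b), if $u \lessdot v$ were a cover in $P_r$ witnessed by a directed path in $r$ of length $\geq 2$, then any intermediate vertex $w$ would satisfy $u <_{P_r} w <_{P_r} v$, contradicting the cover property; hence the cover is realized by a single edge of $r$. Thus $P_r$ defines a valid heap $m_r \in M$ with $\Phi(m_r)=\sigma$.

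The main step, and the one requiring care, is verifying that these two constructions are mutually inverse. Starting from an orientation $r$, every comparison in $P_r$ arises from a directed path in $r$ along edges of $G(\sigma)$, so the orientation extracted from $P_r$ is $r$ itself. In the other direction, any relation $u<_{P_m} v$ is witnessed by a chain of covers (each lying along an edge of $G(\sigma)$ by (b)), hence by a directed path in the extracted orientation, so $P_m$ coincides with the transitive closure of that orientation. The main obstacle is being precise about the heap-poset dictionary; once one accepts the Viennot–Cartier–Foata identification of a heap with its labeled poset subject to (a) and (b), the lemma becomes a direct translation between partial-order data and orientation data on the common graph $G(\sigma)$.
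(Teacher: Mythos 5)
Your proof is correct and takes essentially the same approach as the paper: both arguments identify a heap with its labeled poset (Viennot's description) and match that poset data with an acyclic orientation of $G(\sigma)$ via transitive closure, using that non-commuting generators correspond exactly to crossings, i.e.\ to edges. The paper's own proof is a terser, one-directional sketch of this correspondence, so your explicit verification of conditions (a) and (b) and of mutual inverseness adds detail rather than a new idea.
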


\begin{proof}
An acyclic orientation $r$ on $G(\sigma)$ defines a partial order on $\sigma$ by saying that two arches $x$ and $y$
satisfy $x<y$ if there is a directed path from $y$ to $x$. In this partial order, two crossing arches are always comparable
since they are adjacent in $G(\sigma)$. We recover the description of heaps in terms of posets, as described above,
so each pair $(\sigma,r)$ corresponds to a heap $m\in M$ with $\Phi(m)=\sigma$.
\end{proof}

To treat the case of $m\in M$ such that $\Phi(m)$ is not a matching, such as $x_{12}x_{23}$,
we are led to introduce a set of commuting variables $(a_i)_{ i \geq 1}$ such that $a_i^2=0$, and consider the specialization
$x_{ij}\mapsto a_ia_j$ which defines a morphism of algebras $\omega : \mathbb{Z}[[M]] \to \mathbb{Z}[[a_1,a_2,\dots]] $.
This way, for any $m\in M$ we have either $\omega(m)=0$, or $\Phi(m) \in \mathcal{M}(V)$
for some $V\subset \mathbb{N}_{>0}$.

Let $m\in M$ such that $\omega(m)\neq0$. As seen in Lemma~\ref{ac_or}, it can be identified with the
pair $(\sigma,r)$ where $\sigma=\Phi(m)$, and $r$ is an acyclic orientation of $G(\sigma)$.
Then the condition defining pyramids is easily translated in terms of $(\sigma,r)$,
indeed we have $m\in P$ if and only if the acyclic orientation $r$ has a unique source
(where a {\it source} is a vertex having no ingoing arrows).

Under the specialization $\omega$, the generating function of trivial heaps is:
\begin{equation} \label{omega1}
\omega\bigg(   \sum_{m \in M^\circ } (-1)^{|m|} m  \bigg) = \sum_{n\geq 0}  (-1)^n C_n e_{2n},
\end{equation}
where $e_{2n}$ is the $2n$th elementary symmetric functions in the $a_i$'s.
Indeed, let $V\subset \mathbb{N}_{>0}$ with $\# V = 2n$, then the coefficient of $\prod_{i\in V} a_i $
in the left-hand side of \eqref{omega1} is $(-1)^n \# \mathcal{N}(V)= (-1)^n C_n$, as can be seen
using Lemma~\ref{Phi}. In particular, it only depends on $n$ so that this generating function can be
expressed in terms of the $e_{2n}$. Moreover, since the variables $a_i$ have vanishing squares their
elementary symmetric functions satisfy
\[
  e_{2n} = \frac{1}{(2n)!}   e_1^{2n},
\]
so that the right-hand side of \eqref{omega1} is actually the exponential generating of the Catalan numbers
(evaluated at $e_1$). It remains to understand the meaning of taking the logarithm of the left-hand side of
\eqref{omega1} using pyramids and Equation~\eqref{logpyr2}.

Note that the relation $=_{\text{comm}}$ becomes a true equality after the specialization $x_{ij}\mapsto a_ia_j$.
So taking the image of \eqref{logpyr2} under $\omega$ and using \eqref{omega1}, this gives
\[ 
  - \log\bigg(  \sum_{n \geq 0} (-1)^n C_n e_{2n}  \bigg) 
  =  \sum_{p\in P} \frac{1}{|p|}  \omega(p).
\]
The argument used to obtain \eqref{omega1} shows as well that the right-hand side of the previous equation
is 
$\sum_{} \frac{x_n}n   e_{2n}$
where $x_n=\#\{ p\in P \; : \; \omega(p)=a_1 \cdots a_{2n} \}$. So we have
\[
  - \log\bigg(  \sum_{n \geq 0} (-1)^n C_n e_{2n}  \bigg) 
  = \sum_{n \geq 0} \frac{x_n}n   e_{2n},
\]
and comparing this with \eqref{relmk2}, we obtain $ \tilde k_{2n} (0) =  \frac {x_n}{n}$.

Clearly, a graph with an acyclic orientation always has a source, and it has a unique source
only when it is root-connected (for an appropriate root, viz. the source). So a pyramid
$p$ such that $\omega(p)\neq0$ can be identified with a pair $(\sigma,r)$ where $r$ is a
root-connected acyclic orientation of $G(\sigma)$. Then using Proposition~\ref{tutte10}, it follows that
\[
  x_n = n  \sum_{\sigma \in \mathcal{M}^c(2n) } T_{G(\sigma)}(1,0).
\]
Here, the factor $n$ in the right-hand side accounts for the $n$ possible choices of the source
in each graph $G(\sigma)$. Eventually, we obtain
\begin{equation} \label{cumultutte0}
  \tilde k _{2n}(0) = \sum_{\sigma \in \mathcal{M}^c(2n) } T_{G(\sigma)} (1,0),
\end{equation}
i.e. we have proved the particular case $q=0$ of Theorem~\ref{cumultutte}.

Let us state again the result in an equivalent form. We can consider that if $\sigma\in\mathcal{M}(2n)$,
the graph $G(\sigma)$ has a canonical root which the arch containing 1. Then, Equation \eqref{cumultutte0}
gives a combinatorial model for the integers $\tilde k_{2n}(0)$:

\begin{thm}
 The integer $\tilde k_{2n}(0)$ counts pairs $(\sigma,r)$ where $\sigma\in\mathcal{M}^c(2n)$, and $r$
 is an acyclic orientation of $G(\sigma)$ whose unique source 
 is the arch of $\sigma$ containing 1.
\end{thm}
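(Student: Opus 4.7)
The plan is to read the theorem off directly from Equation~\eqref{cumultutte0} via Proposition~\ref{tutte10}. For each connected matching $\sigma \in \mathcal{M}^c(2n)$, the crossing graph $G(\sigma)$ is connected, so Proposition~\ref{tutte10} applies as soon as a root is designated. Since $T_{G(\sigma)}(1,0)$ itself depends only on the underlying graph and not on any choice of root, we are free to make the declaration canonical: for every $\sigma$, I would take the root to be the arch of $\sigma$ containing the point $1$.

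The one point requiring verification is the equivalence, for a connected graph $G$ with a chosen vertex $v_0$, between the acyclic orientations that are root-connected at $v_0$ and those whose unique source is $v_0$. I would argue this in the standard way: if an acyclic orientation is root-connected, then any incoming edge at $v_0$ would combine with a directed path out of $v_0$ to close a directed cycle, so $v_0$ is a source; moreover any other source would have no incoming edges and thus could not be reached from $v_0$, so $v_0$ is the unique source. Conversely, if $v_0$ is the unique source, then iterating in-edges backward from any vertex $v$ must terminate at a source by acyclicity and finiteness, hence at $v_0$, and reversing this path exhibits $v$ as reachable from $v_0$.

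Inserting this reformulation of $T_{G(\sigma)}(1,0)$ into \eqref{cumultutte0} and summing over $\sigma \in \mathcal{M}^c(2n)$ produces exactly the enumeration of pairs $(\sigma,r)$ in the statement. There is no genuine obstacle: the equivalence above is elementary, and all the substantive ingredients, namely the reduction to the Tutte evaluation in \eqref{cumultutte0} and the Greene--Zaslavsky interpretation of $T_G(1,0)$, have already been assembled in the preceding sections.
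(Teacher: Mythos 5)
Your proposal is correct and follows essentially the same route as the paper: the theorem is obtained there as a direct restatement of Equation~\eqref{cumultutte0} using the canonical root (the arch containing $1$) together with Proposition~\ref{tutte10}, the equivalence between root-connected acyclic orientations and acyclic orientations with unique source at the root being exactly the observation the paper makes when passing from pyramids to root-connected orientations. Your explicit verification of that equivalence is a sound elaboration of a step the paper only asserts in passing.
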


From this, it is possible to give a combinatorial proof that the integers $\tilde k_{2n}(0)$ are increasing,
as suggested by Lassalle \cite{lassalle} who gave an algebraic proof. Indeed, we can check that pairs
$(\sigma,r)$ where $\{1,3\}$ is an arch of $\sigma$ are in bijection with the same objects but of size one
less, hence $\tilde k_{2n}(0) \leq \tilde k_{2n+2}(0)$.

Before ending this section, note that the left-hand side of \eqref{relmk2} is  $-\log( \frac 1z J_1(2z))$
where $J_1$ is the Bessel function of order 1. There are quite a few other cases where the combinatorics
of Bessel functions is related with the theory of heaps, see the articles of Fédou \cite{fedou1,fedou2},
Bousquet-Mélou and Viennot \cite{bousquet}.

\section{\texorpdfstring{The case $q=2$, the exponential formula}{The case q=2, the exponential formula}}
\label{sec:q=2}

The specialization at $(1,2)$ of a Tutte polynomial has combinatorial significance in terms of 
connected spanning subgraphs (see \cite[Chapter 9]{aigner}), so it is natural to consider
the case $q=2$ of Theorem~\ref{cumultutte}. This case is particular because the factor $(q-1)^{n-1}$ disappears, so
that $\tilde k_{2n}(2) = k_{2n}(2)$. We can then interpret the logarithm in the sense of combinatorial species, by
showing that $\tilde k_{2n}(2)$ counts some {\it primitive} objects and $m_{2n}(2)$ counts {\it assemblies} of those,
just like permutations that are formed by assembling cycles (this is the exponential formula for labeled combinatorial
objects, see \cite[Chapter 3]{aigner}). What we obtain is another more direct proof of Theorem~\ref{cumultutte}, based
on an interpretation of $T_G(1,2)$ as follows.

\begin{prop}[Gioan \cite{gioan}]  \label{propgioan}
 If $G$ is a rooted and connected graph, $T_G(1,2)$ is the number of its root-connected orientations.
\end{prop}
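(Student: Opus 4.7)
The plan is to prove Proposition~\ref{propgioan} by induction on the number of edges, matching the deletion-contraction recursion of the Tutte polynomial specialized at $(1,2)$. Let $R(G)$ denote the number of root-connected orientations of a rooted connected graph $G$. At $(x,y) = (1,2)$ the recursion \eqref{recurtutte} reads: $T_G(1,2)$ equals $T_{G/e}(1,2)$ if $e$ is a bridge, $2T_{G\backslash e}(1,2)$ if $e$ is a loop, and $T_{G/e}(1,2) + T_{G\backslash e}(1,2)$ otherwise. The strategy is to show $R$ obeys the same three-case recursion, together with the initial value $R(G)=1$ on the one-vertex graph without edges.

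The loop case is immediate: a loop $e$ contributes a free binary choice that leaves all reachabilities unchanged, so $R(G) = 2R(G\backslash e)$. The bridge case is nearly as easy: if $e = \{u,v\}$ with $u$ on the root's side of $G\backslash e$, then in any root-connected orientation $e$ must be oriented from $u$ to $v$, and the remaining choices correspond exactly to root-connected orientations of $G/e$, giving $R(G) = R(G/e)$.

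The main obstacle, as expected, is the general case where $e = \{u,v\}$ is neither a bridge nor a loop. Here I would classify root-connected orientations $\omega$ of $G$ according to whether $e$ is \emph{redundant} (removing $e$ leaves $\omega|_{E\backslash e}$ root-connected) or \emph{essential} (not). The redundant orientations are exactly the $2R(G\backslash e)$ orientations obtained by taking any root-connected orientation of $G\backslash e$ and freely choosing a direction for $e$. For the essential ones, I would analyze reachability in the contracted graph $G/e$ as follows: let $S_r$ be the set of vertices reachable from the root $r$ under $\omega|_{E\backslash e}$ in $G\backslash e$. Root-connectedness of the induced orientation of $G/e$ means we may also use teleportation $u\leftrightarrow v$. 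A short case analysis on which of $u,v$ lies in $S_r$ shows that the orientations of $E\backslash e$ that are root-connected in $G/e$ but for which $S_r \neq V$ are precisely those for which exactly one choice of direction for $e$ extends them to a root-connected orientation of $G$. Combined with the redundant count this yields
\[
  R(G) \;=\; 2R(G\backslash e) \;+\; \bigl(R(G/e) - R(G\backslash e)\bigr) \;=\; R(G\backslash e) + R(G/e),
\]
matching the Tutte recurrence.

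The hardest step is certainly the non-bridge, non-loop case, and specifically getting the accounting right for the essential orientations; the reachability analysis above is the key idea. Once this is in place, induction on $|E|$ concludes, with the base case being a single vertex (where both sides equal $1$) and with loops and bridges allowing the recursion to reduce strictly.
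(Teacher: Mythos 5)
The paper gives no proof of this proposition: it is quoted from Gioan's article \cite{gioan} and used as a black box, so there is nothing internal to compare against. Judged on its own, your deletion--contraction induction is correct and is the natural ``elementary'' route (Gioan's original argument goes through his cycle--cocycle reversal system, which yields more refined information but is much heavier machinery). The base case, the loop case, the bridge case, and the split of root-connected orientations of $G$ into redundant and essential ones are all right, and the key identity (essential orientations are counted by $R(G/e)-R(G\backslash e)$) is true. Two points deserve to be made explicit if you write this up. First, the ``short case analysis'' hides the only genuinely delicate step: when exactly one of $u,v$, say $u$, lies in $S_r$, you must show that the reachable set of the orientation $\omega'\cup\{u\to v\}$ in $G$ coincides with the reachable set of $\omega'$ in $G/e$; the inclusion from $G$ to $G/e$ is trivial (project paths), but the converse requires lifting a directed path of $G/e$ through the contracted vertex, and this works only because the first arrival at $\{u,v\}$ must be at $u$ (an arrival at $v$ would contradict $v\notin S_r$), after which both $u$ and $v$ are reachable in $G$ via the edge $u\to v$ and every later step of the path lifts. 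You should also record that when both or neither of $u,v$ lie in $S_r$, no extension is root-connected in $G$ and $\omega'$ is root-connected in $G/e$ only if it already is in $G\backslash e$, so these cases contribute to neither side. Second, the loop case silently uses the convention that a loop has two orientations; this is harmless here (the crossing graphs $G(\sigma)$ in the paper have no loops or multiple edges), but it should be stated if the proposition is claimed for arbitrary graphs. With those two clarifications the proof is complete.
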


This differs from the more traditional interpretation of $T_G(1,2)$ in terms of connected
spanning subgraphs mentioned above, but it is what naturally appears in this context.

\begin{defn}
 Let $\mathcal{M}^+(2n)$ be the set of pairs $(\sigma,r)$ where $\sigma\in\mathcal{M}(2n)$ and $r$ is an
 orientation of the graph $G(\sigma)$. Such a pair is called an {\it augmented matching}, and is depicted
 with the convention that the arch $\{i,j\}$ lies above the arch $\{k,\ell\}$ if there is an oriented edge
 $\{i,j\} \rightarrow \{k,\ell\}$, and behind it if there is an oriented edge $\{k,\ell \} \rightarrow \{i,j\}$
. 
\end{defn}

See Figure~\ref{aug} for example. Clearly, $\#\mathcal{M}^+(2n) = m_{2n}(2)$. Indeed, each graph
$G(\sigma)=(V,E)$ has $2^{\# E}$ orientations, and $\# E= \cro(\sigma)$, so this follows from \eqref{mucro}.

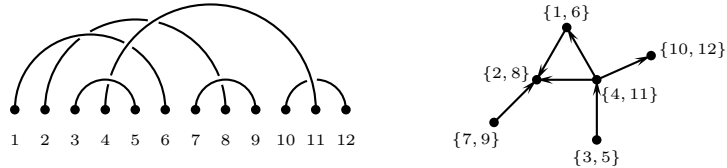
\begin{figure}[h!tp] \psset{unit=4mm}
\begin{pspicture}(1,-1)(12,3.5)
  \psdots(1,0)(2,0)(3,0)(4,0)(5,0)(6,0)(7,0)(8,0)(9,0)(10,0)(11,0)(12,0)
  \rput(1,-1){\tiny 1}\rput(2,-1){\tiny 2}\rput(3,-1){\tiny 3}\rput(4,-1){\tiny 4}\rput(5,-1){\tiny 5}
  \rput(6,-1){\tiny 6}\rput(7,-1){\tiny 7}\rput(8,-1){\tiny 8}\rput(9,-1){\tiny 9}\rput(10,-1){\tiny 10}
  \rput(11,-1){\tiny 11}\rput(12,-1){\tiny 12}
  \psarc(8,0){1}{0}{180}\psarc(11,0){1}{0}{88}\psarc(11,0){1}{107}{180}
  \psarc(4,0){1}{0}{180}\psarc(7.5,0){3.5}{0}{161}\psarc(7.5,0){3.5}{166}{180}
  \psarc(3.5,0){2.5}{0}{56}\psarc(3.5,0){2.5}{63}{180}
  \psarc(5,0){3}{0}{16}\psarc(5,0){3}{22}{75}\psarc(5,0){3}{82}{120}\psarc(5,0){3}{128}{180}
\end{pspicture}
\hspace{1.5cm} \psset{unit=8mm}
\begin{pspicture}(4,3)
  \psdots(1,1)(2,1)(1.5,1.87)(0.29,0.29)(2,0)(2.9,1.4)
  \psline[arrowlength=2]{<-}(1,1)(1.5,1.87)\psline[arrowlength=2]{->}(2,1)(1.5,1.87)
  \psline[arrowlength=2]{<-}(1,1)(2,1)\psline[arrowlength=2]{<-}(1,1)(0.29,0.29)
  \psline[arrowlength=2]{<-}(2,1)(2,0)\psline[arrowlength=2]{->}(2,1)(2.9,1.4)
  \rput(1.5,2.1){\tiny \{$1,6$\}}\rput(0.5,1.1){\tiny \{$2,8$\}}
  \rput(0,0){\tiny \{$7,9$\}}\rput(2,-0.3){\tiny \{$3,5$\}}
  \rput(2.55,0.75){\tiny \{$4,11$\}}\rput(3.6,1.5){\tiny \{$10,12$\}}
\end{pspicture}
\caption{An augmented matching $(\sigma,r)$ and the corresponding orientation of $G(\sigma)$. \label{aug}}
\end{figure}

Notice that if there is no directed cycle in the oriented graph $(G(\sigma),r)$, the augmented
matching $(\sigma,r)$ can be identified with a heap $m\in M$ as defined in the previous section.
The one in Figure~\ref{aug} would be $x_{3,5}x_{4,11}x_{10,12}x_{1,6}x_{7,9}x_{2,8}$.
Actually, the application of the exponential formula in the present section is quite reminiscent of
the link between heaps and pyramids as seen in the previous section.

\begin{defn}
 Recall that each graph $G(\sigma)$ is rooted with the convention that the root is the arch containing 1.
 Let $\mathcal{I}(2n) \subset \mathcal{M}^+(2n)$ be the set of augmented matchings $(\sigma,r)$ such that
 $\sigma$ is connected and $r$ is a root-connected orientation of $G(\sigma)$. The elements of $\mathcal{I}(2n)$
 are called {\it primitive} augmented matchings. For any $V\subset \mathbb{N}_{>0}$ with $\#V=2n$, we also
 define the set $\mathcal{I}(V)$, with the same combinatorial description as $\mathcal{I}(2n)$ except that
 matchings are based on the set $V$ instead of $\{1,\dots,2n\}$.
\end{defn}

Using Proposition~\ref{propgioan}, we have 
\[
  \# \mathcal{I}(2n) = \sum_{\sigma\in\mathcal{M}^c(2n)} T_{G(\sigma)}(1,2),
\]
so that the particular case $q=2$ of Theorem~\ref{cumultutte} is the equality $\# \mathcal{I}(2n) = k_{2n}(2)$.
To prove this from \eqref{relmk} and using the exponential formula, we have to see how an augmented
matching can be decomposed into an assembly of primitive ones, as stated in Proposition~\ref{propdecomp}
below. This decomposition thus proves the case $q=2$ of Theorem~\ref{cumultutte}.
Note also that the bijection given below is equivalent to the first identity in \eqref{inversion}.

\begin{prop} \label{propdecomp}
 There is a bijection
\[
   \mathcal{M}^+(2n) \longrightarrow \biguplus_{\pi\in\mathcal{P}(n)} \; \prod_{ V\in \pi } \mathcal{I}(V).
\]
\end{prop}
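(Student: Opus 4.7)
The plan is to build the bijection greedily by iteratively peeling off the root-connected component of the arch containing the smallest remaining vertex.

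More precisely, given $(\sigma,r)\in\mathcal{M}^+(2n)$, let $a_1$ be the arch of $\sigma$ containing $1$ and let $B_1$ be the set of arches reachable from $a_1$ by a directed path in $(G(\sigma),r)$; let $V_1\subset\{1,\dots,2n\}$ be the union of these arches. Restrict $\sigma$ and $r$ to $V_1$, remove $V_1$, and recurse on the complement, using the arch containing its smallest element as the next root. This produces a partition $\pi=\{V_1,\dots,V_k\}$ of $\{1,\dots,2n\}$ (with each block of even size, since each $V_i$ is a union of arches) together with an augmented matching on each block.

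Next I would verify two things. First, each $(\sigma|_{V_i},r|_{V_i})$ lies in $\mathcal{I}(V_i)$: root-connectedness of $r|_{V_i}$ with respect to $a_i$ holds by construction, and the undirected induced subgraph of $G(\sigma)$ on $B_i$ is connected because every directed $a_i$-path stays inside $B_i$, which is precisely the condition for $\sigma|_{V_i}$ to be a connected matching. Second, the cross-edges between distinct blocks are \emph{forced}: for $i<j$, every edge of $G(\sigma)$ joining an arch in $V_i$ to an arch in $V_j$ must point from $V_j$ to $V_i$, since otherwise its head would be reachable from $a_i$ and hence already absorbed into $V_i$.

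The inverse map is then determined. Given a partition of $\{1,\dots,2n\}$ equipped with primitive data $(\sigma_V,r_V)\in\mathcal{I}(V)$ on each block, order the blocks by smallest element, take $\sigma=\bigcup_V\sigma_V$, keep each $r_V$ on within-block edges, and orient every between-block edge from the later block to the earlier one. One checks that the peeling procedure applied to this reconstructed $(\sigma,r)$ returns exactly the original data, and conversely. The only delicate point — what I expect to be the main obstacle — is the rigidity of the cross-edge orientation, since that is what simultaneously makes the inverse well-defined and the decomposition well-behaved under recursion; once that is in hand, everything else is bookkeeping about reachability in a directed graph. Combining this bijection with Proposition~\ref{propgioan} (to identify $\#\mathcal{I}(V)$ with $k_{\#V}(2)$) then yields the $q=2$ case of Theorem~\ref{cumultutte} via the exponential formula.
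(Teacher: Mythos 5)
Your construction coincides with the paper's proof: peel off the set of arches reachable from the arch containing the smallest remaining element, recurse on the complement, and invert by orienting every cross-edge from the later block to the earlier one. Your explicit check that the cross-edge orientations are forced (which is what makes the map injective) is a detail the paper leaves implicit, but the argument is the same.
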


\begin{proof}
Let $(\sigma,r) \in \mathcal{M}^+(2n)$, the bijection is defined as follows. Consider the vertices of $G(\sigma)$ which are 
accessible from the root. This set of vertices defines a matching on a subset $V_1\subset \{1,\dots,2n\}$. For example,
in the case in Figure~\ref{aug}, the root is $\{1,6\}$ and the only other accessible vertex is $\{2,8\}$, so $V_1=\{1,2,6,8\}$.
Together with the restriction of the orientation $r$ on this subset of vertices, this defines an augmented matching
$(\sigma_1,r_1)\in\mathcal{M}^+(V_1)$ which by construction is primitive. By repeating this operation on the set
$\{1,\dots,2n\}\backslash V_1$, we find $V_2\subset \{1,\dots,2n\}\backslash V_1$ and $(\sigma_2,r_2)\in\mathcal{I}(V_2)$,
and so on. See Figure~\ref{decomp} for the result, in the case of the augmented matching in Figure~\ref{aug}.

The inverse bijection is easily described. If $(\sigma_i,r_i)\in\mathcal{I}(V_i)$ for any $1\leq i\leq k$ where $\pi=\{V_1,\dots,V_k\}$,
let $\sigma=\sigma_1 \cup \dots \cup \sigma_k$, and the orientation $r$ of $G(\sigma)$ is as follows. Let $e$ be an edge of
$G(\sigma)$ and $x_1$, $x_2$ be its endpoints, with $x_1\in\sigma_{j_1}$ and $x_2\in\sigma_{j_2}$. If $j_1=j_2$, the edge
$e$ is oriented in accordance with the orientation $r_{j_1}=r_{j_2}$. Otherwise, say $j_1<j_2$, then the edge $e$ is oriented
in the direction $x_1 \leftarrow x_2$.
\end{proof}

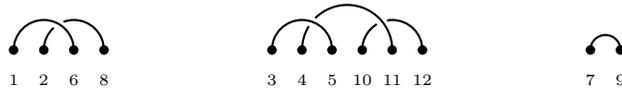
\begin{figure}[h!tp] \psset{unit=4mm}
\begin{pspicture}(1,-1)(4,1.5)
  \psdots(1,0)(2,0)(3,0)(4,0)
  \psarc(2,0){1}{0}{180}\psarc(3,0){1}{0}{110}\psarc(3,0){1}{132}{180}
  \rput(1,-1){\tiny 1}\rput(2,-1){\tiny 2}\rput(3,-1){\tiny 6}\rput(4,-1){\tiny 8}
\end{pspicture}
\hspace{2cm}
\begin{pspicture}(1,-1)(6,1.5)
  \psdots(1,0)(2,0)(3,0)(4,0)(5,0)(6,0)
  \rput(1,-1){\tiny 3}\rput(2,-1){\tiny 4}\rput(3,-1){\tiny 5}
  \rput(4,-1){\tiny 10}\rput(5,-1){\tiny 11}\rput(6,-1){\tiny 12}
  \psarc(2,0){1}{0}{180}
  \psarc(3.5,0){1.5}{0}{135}\psarc(3.5,0){1.5}{148}{180}
  \psarc(5,0){1}{0}{100}\psarc(5,0){1}{121}{180}
\end{pspicture}
\hspace{2cm}
\begin{pspicture}(1,-1)(2,1.5)
  \psdots(1,0)(2,0)
  \psarc(1.5,0){0.5}{0}{180}
  \rput(1,-1){\tiny 7}\rput(2,-1){\tiny 9}
\end{pspicture}
\caption{Decomposition of an augmented matching into primitive ones. \label{decomp}}
\end{figure}

\section{Cumulants of the free Poisson law}

The free Poisson law appears in free probability and random matrices theory, and can be characterized by the fact that 
all free cumulants are equal to some $\lambda>0$, see \cite{nica}. It follows from \eqref{inversionfree}  that its 
moments $m_n(\lambda)$ count noncrossing partitions, and consequently the coefficients are given by the Narayana 
numbers (see \cite{stanley}):
\[
  m_n(\lambda)= \sum_{\pi\in\mathcal{NC}(n)} \lambda^{\# \pi} = \sum_{k=1}^n \frac {\lambda^k}{n} \binom{n}{k}\binom{n}{k-1}.
\]
The corresponding cumulants are as before defined by 
\begin{equation}  \label{defklam}
  \sum_{n\geq1} k_n(\lambda) \frac{z^n}{n!} =\log \Bigg( \sum_{n\geq0} m_n(\lambda) \frac{z^n}{n!} \Bigg).
\end{equation}
For any set partition $\pi\in\mathcal{P}(V)$ for some $V\subset \mathbb{N}$, we can define a crossing graph $G(\pi)$, 
whose vertices are the blocks of $\pi$, and there is an edge between $b,c\in\pi$ if $\{b,c\}$ is not a noncrossing 
partition. Note that $\pi$ is connected if and only if the graph $G(\pi)$ is connected.
The two different proofs for the semicircular cumulants show as well the following:

\begin{thm} \label{cumulpoisson}
For any $n\geq1$, we have:
\[
  k_n(\lambda) = - \sum_{\pi\in\mathcal{P}^c(n)}  (-\lambda)^{\# \pi } T_{G(\pi)}(1,0).
\]
\end{thm}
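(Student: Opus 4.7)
The plan is to mimic the derivation of Theorem~\ref{cumultutte} carried out in Section~3, with the free Poisson moments $m_n(\lambda) = \sum_{\tau\in\mathcal{NC}(n)} \lambda^{\#\tau}$ replacing the $q$-semicircular moments, and with Proposition~\ref{proptutte} applied at $q=0$ rather than at general $q$. The factor $(-\lambda)^{\#\pi}$ and the overall sign in the statement should emerge, respectively, from the $\lambda^{\#\sigma}$ weighting of the noncrossing refinements and from the $(q-1)^{n-1}$ prefactor of Proposition~\ref{proptutte} specialized at $q=0$.

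Concretely, I would begin from the Möbius inversion $k_n(\lambda) = \sum_{\pi\in\mathcal{P}(n)} m_\pi(\lambda)\mu(\pi,\hat 1)$ and first establish the analogue of Lemma~\ref{lemmpi}: using the bijection $\sigma \mapsto (\sigma|_b)_{b\in\pi}$ together with the product formula for $m_\pi(\lambda)$, one gets
\[
m_\pi(\lambda) = \sum_{\substack{\sigma\leq\pi \\ \sigma|_b \in \mathcal{NC}(b)\ \forall b\in\pi}} \lambda^{\#\sigma}.
\]
Defining $\cro(\sigma,\pi)$ as the number of pairs of crossing blocks of $\sigma$ that lie in a common block of $\pi$ (the natural extension of the matching-crossings used earlier to general partitions), the noncrossing condition becomes $\cro(\sigma,\pi)=0$. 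Swapping the order of summation then gives $k_n(\lambda) = \sum_\sigma \lambda^{\#\sigma} W(\sigma)$ where $W(\sigma) = \sum_{\pi\geq\sigma} [\cro(\sigma,\pi)=0]\,\mu(\pi,\hat 1)$. Via the canonical isomorphism $[\sigma,\hat 1]\cong \mathcal{P}(V)$ with $V=\sigma$ (as in Lemma~\ref{Wgraph}), the number $\cro(\sigma,\pi)$ corresponds to $i(E,\pi')$ for the edge set $E$ of $G(\sigma)$, so $W(\sigma)$ becomes the $q\to 0$ limit of the sum appearing in Proposition~\ref{proptutte}. That proposition gives $W(\sigma) = (-1)^{\#\sigma-1}T_{G(\sigma)}(1,0)$ if $G(\sigma)$ is connected (i.e.\ if $\sigma\in\mathcal{P}^c(n)$) and $0$ otherwise, which after combining with the $\lambda^{\#\sigma}$ weight produces exactly the claimed formula.

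The genuinely new point compared with Section~3 is the first step: verifying that the combinatorial identity for $m_\pi(\lambda)$ really is a clean ``noncrossing'' analogue of Lemma~\ref{lemmpi}, and in particular that the condition defining which $\sigma$'s contribute is precisely $i(E,\pi)=0$ for the crossing graph. After that, the argument is a mechanical adaptation of \eqref{kW}, \eqref{W1}, \eqref{W2} and their consequence. As a sanity check I would alternatively redo the computation via the heap-theoretic method of Section~\ref{sec:heaps}, replacing the Cartier-Foata generators $x_{ij}$ by generators indexed by the possible blocks (with commutation governed by noncrossing-ness), specializing $x_b \mapsto \lambda\prod_{i\in b}a_i$ with $a_i^2=0$, and invoking Proposition~\ref{tutte10}; the appearance of $(-\lambda)^{\#\pi}$ then comes naturally from the signed trivial-heap generating function, providing an independent derivation that matches the sign convention in the statement.
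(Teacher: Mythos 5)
Your proposal is correct and follows essentially the same route as the paper: the noncrossing analogue of Lemma~\ref{lemmpi} (the paper's relation $\rho \unlhd \pi$), the interchange of summation leading to $k_n(\lambda)=\sum_\rho \lambda^{\#\rho}W(\rho)$, and the identification of $W(\rho)$ with the $q=0$ specialization of Proposition~\ref{proptutte} giving $(-1)^{\#\rho-1}T_{G(\rho)}(1,0)$ on connected partitions. Your ``sanity check'' via heaps with generators $x_b\mapsto \lambda\prod_{i\in b}a_i$ is likewise the paper's second proof, so nothing essential is missing.
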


Let us sketch the proofs. If $\pi\in\mathcal{P}(n)$, similar to Lemma~\ref{lemmpi} we have:
\[
  m_{\pi}(\lambda) = \sum_{\substack{ \rho \in \mathcal{P}(n) \\ \rho \unlhd \pi } } \lambda^{\# \rho}
\]
where the relation $\rho \unlhd \pi $ means that $\rho \leq \pi $ and $\rho|_b$ is a noncrossing
partition for each $b\in\pi$. Indeed the map $\rho \mapsto (\rho|_b)_{b\in\pi}$ is a bijection
between $\{ \rho\in\mathcal{P}(n) : \rho \unlhd \pi \}$ and $\prod_{b\in\pi} \mathcal{NC}(b)$.
The same computation as in \eqref{kW} and \eqref{W1} gives
\begin{equation} \label{invpoisson}
    k_n(\lambda) = \sum_{\pi\in\mathcal{P}(n)} m_{\pi}(\lambda) \mu(\pi,\hat 1) 
  = \sum_{ \substack{ \rho,\pi \in \mathcal{P}(n) \\ \rho \unlhd \pi } } \lambda^{ \# \rho } \mu(\pi,\hat 1)
  = \sum_{\rho \in \mathcal{P}(n) }  \lambda^{\# \rho}  W(\rho),
\end{equation}
where 
\[
  W(\rho) = \sum_{ \substack{ \pi \in \mathcal{P}(n) \\ \rho \unlhd \pi } } \mu(\pi,\hat 1).
\]
Denoting $G(\rho)=(V,E)$ the crossing graph of $\rho$, the previous equality is rewritten
$W(\rho) = \sum  \mu(\pi,\hat 1)$ where the sum is over $\pi\in\mathcal{P}(V)$ such that
for any $b\in\pi$, $e\in E$, the block $b$ does not contain both endpoints of the edge $e$.
Then the case $q=0$ of Proposition~\ref{proptutte} shows that
\[ 
   W(\rho) = \begin{cases}
               (-1)^{\# \rho +1 } T_{G(\rho)}(1,0) & \text{if } \rho\in\mathcal{P}^c(n), \\
               0  & \text{otherwise.}
             \end{cases}
\]
Together with \eqref{invpoisson}, this completes the first proof of Theorem~\ref{cumulpoisson}.

As for the second proof, we follow the outline of Section~\ref{sec:heaps}, but with another definition 
for $M$, $M^\circ$, $P$ and $\omega$. Let $M$ be the monoid with generators 
$(x_V)$ where $V$ runs through finite subsets of $\mathbb{N}_{>0}$, and with relations $x_Vx_W=x_Wx_V$ if 
$\{V,W\}$ is a noncrossing partition. We also denote $M^\circ \subset M$ the corresponding set of trivial heaps, 
i.e. products of pairwise commuting generators. The subset $P\subset M$ is characterized by Definition~\ref{defpyr}.
Now, we consider the morphism $\omega$ defined by
\[
  \omega(x_V) = \lambda \prod_{i\in V} a_i.
\]
We have:
\begin{align*}
   \omega \bigg( \sum_{m\in M^\circ}  (-1)^{|m|} m \bigg) 
  &= \sum_{V} \sum_{\pi\in\mathcal{NC}(V)} (-1)^{\#\pi} \prod_{b\in \pi} \omega(x_b) \\
  &= \sum_{V} \sum_{\pi\in\mathcal{NC}(V)} (-\lambda)^{\#\pi} \prod_{i\in V} a_i \\
  &= \sum_{n\geq 0} m_n(-\lambda) e_{n}
  = \sum_{n\geq 0} m_n(-\lambda) \frac{e_{1}^n}{n!}.
\end{align*}
We still understand that $V\subset \mathbb{N}_{>0}$ is finite,
$(a_i)_{i\geq 1}$ are commuting variables with vanishing squares, and $e_n$ is the $n$th
elementary symmetric function in the $a_i$'s.
Equation \eqref{logpyr2} is still valid as such with the new definition of $M^\circ$ and $P$,
and taking the image by $\omega$ gives:
\[
  -\log \bigg( \sum_{n\geq 0} m_n(-\lambda) \frac{e_{1}^n}{n!} \bigg) = \sum_{p\in P} \frac{1}{|p|} \omega(p).
\]
Comparing with \eqref{defklam} and taking the coefficient of $e_1^n$, we get:
\[
  -k_n(-\lambda) =   \sum_{\substack{ p\in P \\ \omega(p) = a_1 \cdots a_n  }} \frac{\lambda^{|p|}}{|p|} .
\]
Let $p\in P$ be such that $\omega(p) = a_1 \cdots a_n$. Following the idea in Lemma~\ref{ac_or},
we can write $p=x_{V_1}\cdots x_{V_k}$ where $V_1,\dots,V_k$ are the blocks of a set partition 
$\pi\in\mathcal{P}(n)$, and $p$ is characterized by $\pi$ together with an acyclic orientation 
of the graph $G(\pi)$ having a unique source. Following the idea at the end of Section~\ref{sec:heaps}, 
we thus complete the second proof of Theorem~\ref{cumulpoisson}.

\section{Final remarks}

It would be interesting to explain why the same combinatorial objects appear both for 
$c_{2n}(q)$ and $k_{2n}(q)$. This suggests that there exists some quantity that interpolates between 
the classical and free cumulants of the $q$-semicircular law, however, building a noncommutative 
probability theory that encompasses the classical and free ones appear to be elusive (see \cite{leeuwen2}
for a precise statement). It means that building such an interpolation would rely not only on the
$q$-semicircular law and its moments, but on its realization as a noncommutative random variable.
This might be feasible using $q$-Fock spaces \cite{bozejko1,bozejko2} but is beyond the scope
of this article.

\section*{Acknowledgment}

This work was initiated during the trimester ``Bialgebras in Free Probability'' at the Erwin Schrödinger 
Institute in Vienna. In particular I thank Franz Lehner, Michael Anshelevich and Natasha Blitvić for 
their conversation.


\medskip

\setlength{\parindent}{0pt}

\end{document}